\setlist[itemize]{itemsep=0ex,parsep=0ex,topsep=0ex,partopsep=0ex,leftmargin=4.5ex}
\allowdisplaybreaks \numberwithin{equation}{section}
 \newtheorem{thm}{Theorem}[section]
\newtheorem{result}[thm]{Result}
 \theoremstyle{definition}
 \theoremstyle{remark}
\newtheorem{remark}[thm]{Remark}
 \numberwithin{equation}{section}
\def\geq{{\geqslant}}
\begin{document}

\title[On Fano schemes of linear spaces of general complete intersections]{On Fano schemes of linear spaces of general complete intersections}

\keywords{Complete intersections; parameter spaces; Fano schemes; rational and elliptic
curves}

\thanks{{\em Mathematics Subject Classification (2010).} Primary 14M10; Secondary 14C05, 14M15, 14M20.}

\thanks{This collaboration has benefitted of funding from the MIUR Excellence Department Project awarded to the Department of Mathematics, 
University of Rome Tor Vergata (CUP: E83-C18000100006).}

\author{Francesco Bastianelli}
\address{Francesco Bastianelli, Dipartimento  di Matematica, Universit\`{a} degli Studi di Bari ``Aldo Moro", Via Edoardo Orabona 4, 70125 Bari -- Italy}
\email{francesco.bastianelli@uniba.it}

\author{Ciro Ciliberto}
\address{Ciro Ciliberto, Dipartimento  di Matematica, Universit\`{a} degli Studi di Roma ``Tor Vergata", Viale della Ricerca Scientifica 1, 00133 Roma -- Italy}
\email{cilibert@mat.uniroma2.it}

\author{Flaminio Flamini}
\address{Flaminio Flamini, Dipartimento  di Matematica, Universit\`{a} degli Studi di Roma ``Tor Vergata", Viale della Ricerca Scientifica 1, 00133 Roma -- Italy}
\email{flamini@mat.uniroma2.it}

\author{Paola Supino}
\address{Paola Supino, Dipartimento  di Matematica e Fisica, Universit\`{a} degli Studi ``Roma Tre", Largo S. L. Murialdo 1, 00146 Roma -- Italy}
\email{supino@mat.uniroma3.it}

\begin{abstract} We consider the {\em Fano scheme} $F_k(X)$ of $k$--dimensional linear  subspaces contained in a  complete intersection $X \subset \mathbb{P}^n$ of multi--degree $\underline{d} = (d_1, \ldots, d_s)$. Our main result is an extension of a result of Riedl and Yang concerning Fano schemes of lines on very general hypersurfaces: we consider the case when $X$ is a very general complete intersection and $\Pi_{i=1}^s d_i > 2$ and we find conditions on $n$, $\underline{d}$ and $k$ under which $F_k(X)$ does not contain either rational or elliptic curves.  At the end of the paper, we study the case $\Pi_{i=1}^s d_i = 2$. 
\end{abstract}

\maketitle


\section{Introduction} In this paper we are concerned with the {\em Fano scheme} $F_k(X)\subset \mathbb G(k,n)$, parameterizing $k$--dimensional linear subspaces contained in $X \subset \mathbb{P}^n$, when $X$ is a complete intersection of multi--degree $\underline{d} = (d_1, \ldots, d_s)$, with $1\leqslant s \leqslant n-2$. 
We will avoid the trivial case in which $X$ is a linear subspace, so that $\Pi_{i=1}^s\;d_i \geqslant 2$.

Our inspiration  has been the following result by Riedl and Yang concerning the case of hypersurfaces: 

\begin{thm}\label{res2} (cf.\;\cite[Thm.\;3.3]{RY}) If $X \subset \mathbb{P}^n$ is a very general hypersurface of degree $d$ such that $n \leqslant \frac{(d+1)(d+2)}{6}$, then $F_1(X)$ 
contains no rational curves. 
\end{thm}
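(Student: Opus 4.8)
The plan is to reduce the statement to the non-existence of line-ruled surfaces in $X$ and then to a parameter count. A non-constant morphism $\varphi\colon \mathbb{P}^1 \to F_1(X)$ pulls back the universal line to a $\mathbb{P}^1$-bundle $\pi\colon \mathcal{L} \to \mathbb{P}^1$ carrying a tautological map $\psi\colon \mathcal{L} \to \mathbb{P}^n$ with image in $X$. Since $\varphi$ is non-constant the lines $\{\ell_t\}_{t\in\mathbb{P}^1}$ are not all equal, so $S := \psi(\mathcal{L})$ is an irreducible surface of some degree $\delta \ge 1$, contained in $X$ and covered by the one-parameter family $\{\ell_t\}$. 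Hence it suffices to prove that a very general $X$ contains no irreducible line-ruled surface: a rational curve in $F_1(X)$ could not then exist.

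The extremal case $\delta = 1$ turns out to govern the whole statement, and I would treat it first. A line-ruled irreducible surface of degree $1$ is a $2$-plane $\Pi \cong \mathbb{P}^2 \subset \mathbb{P}^n$, and $\Pi \subseteq X$ means exactly $F_2(X) \ne \emptyset$. Restricting a degree-$d$ form to $\Pi$ imposes $\binom{d+2}{2} = \tfrac{(d+1)(d+2)}{2}$ conditions, so the incidence variety $\{(\Pi, X) : \Pi \subseteq X\} \subset \mathbb{G}(2,n)\times \mathbb{P}^N$, with $\mathbb{P}^N = |\mathcal{O}_{\mathbb{P}^n}(d)|$, has dimension $3(n-2) + N - \binom{d+2}{2}$. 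The hypothesis $n \le \tfrac{(d+1)(d+2)}{6}$ is precisely the inequality $3(n-2) < \binom{d+2}{2}$, so this incidence variety has dimension $< N$, its projection to $\mathbb{P}^N$ is not dominant, and a very general $X$ contains no $2$-plane.

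For $\delta \ge 2$ I would run the analogous count stratum by stratum. Fixing the discrete type $\tau$ of $S$ (degree $\delta$, dimension $m$ of its linear span, and scroll/cone type), let $\mathcal{S}_\tau$ be the parameter space of such surfaces in $\mathbb{P}^n$ and form $\mathcal{I}_\tau = \{(S,X) : S \subseteq X\}$. The fibre of $\mathcal{I}_\tau \to \mathcal{S}_\tau$ over $S$ is the linear system of hypersurfaces through $S$, of dimension $N - c(S)$ with $c(S) = \dim \mathrm{im}\big[H^0(\mathcal{O}_{\mathbb{P}^n}(d)) \to H^0(S, \mathcal{O}_S(d))\big]$; thus $\dim \mathcal{I}_\tau \le \dim \mathcal{S}_\tau + N - c(S)$, and it suffices to verify $\dim \mathcal{S}_\tau < c(S)$ for every $\tau$. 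Since $\dim \mathcal{S}_\tau$ is of order $\delta n + \delta^2$ while $c(S)$ is of order $\tfrac{\delta}{2}d^2$, and since the span forces $\delta \lesssim n \le \tfrac{(d+1)(d+2)}{6}$, the inequality for $\delta \ge 2$ is implied by — and is strictly weaker than — the plane case, once $n \le \tfrac{(d+1)(d+2)}{6}$.

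The main obstacle is precisely this uniform step. One must bound $\dim \mathcal{S}_\tau$ from above and $c(S)$ from below simultaneously across all admissible types, including degenerate cases such as cones over rational curves, projected (non-linearly-normal) scrolls, and surfaces of small span, where the naive estimate of $c(S)$ by the Hilbert function of $S$ must be replaced by a genuine lower bound on the number of conditions imposed by a nondegenerate surface. Keeping this bound sharp enough to preserve the constant $\tfrac{1}{6}$, rather than only a weaker Clemens-type range, is the delicate point, and is exactly where the comparison with the plane case of the second step has to be made rigorous.
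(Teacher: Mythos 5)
Your reduction to line-ruled surfaces is correct, and so is the exclusion of $2$-planes (though ``precisely'' overstates it: $n\leqslant\frac{(d+1)(d+2)}{6}$ implies $3(n-2)<\binom{d+2}{2}$ but is not equivalent to it; the implication is all you need). The genuine gap is the step $\delta\geqslant 2$, and it is not merely ``delicate'': the stratum-by-stratum inequality $\dim\mathcal{S}_\tau<c(S)$ that your plan requires is \emph{false} for infinitely many strata. Take for $\tau$ the cones with vertex a point $p\in\mathbb{P}^4\subset\mathbb{P}^n$ over rational curves of degree $\delta$ spanning a $\mathbb{P}^3\subset\mathbb{P}^4$ not through $p$; these are line-ruled surfaces, and they do occur in your decomposition. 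Restriction of degree-$d$ forms to such a cone $S$ factors through restriction to its span, so $c(S)=h_S(d)\leqslant\binom{d+4}{4}$, a bound independent of $\delta$; on the other hand $\dim\mathcal{S}_\tau\geqslant 4\delta$ (already the rational space curves contribute $4\delta$ moduli), which is unbounded in $\delta$. Hence for $\delta\gg d$ one has $\dim\mathcal{I}_\tau>N$ and the count proves nothing for these strata. Nor can a sharper lower bound on $c(S)$ fix this: the obstruction is structural, since every hypersurface containing a $4$-plane genuinely contains all the cones inside that $4$-plane, so the high-degree, small-span strata are entangled with the linear ones and can only be excluded on a very general $X$ by an argument that transfers information across spans, not by independent counts. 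Your asymptotic $c(S)\sim\frac{\delta}{2}d^2$ is exactly what breaks down for surfaces of small span and for non-linearly-normal surfaces --- the cases you flag as ``the delicate point'' are in fact where the method as stated collapses.

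This is also where your route diverges from the paper's. The paper does not re-prove this statement (it quotes it from \cite{RY}); it proves the generalization, Theorem \ref{thm:BCFS}, by the Riedl--Yang mechanism, which is designed to bypass the trap above. The two ingredients are: (i) for $m=\binom{d+1}{2}-2$, the Fano scheme $F_1(Y')$ of a very general hypersurface $Y'\subset\mathbb{P}^m$ of degree $d$ is smooth with $\omega_{F_1(Y')}=\mathcal{O}_{F_1(Y')}(1)$, hence of general type, so no rational curve passes through a very general point of it; (ii) the codimension-propagation lemma of \cite{RY} (Result \ref{res2bis}): if every $r$-plane containing a given $(r-1)$-plane of a ``bad'' family is itself bad, then the codimension of the bad locus increases by at least one when passing from $r$-plane sections to $(r-1)$-plane sections. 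Iterating from $m$ down to $n$ inside a large $\mathbb{P}^M$ shows that the locus of pairs (line, hypersurface) admitting a rational curve in the Fano scheme through that line has codimension at least $m-n+1\geqslant t+1$ in the incidence variety, which exceeds the fibre dimension $t$ over the space of hypersurfaces, so this locus cannot dominate. If you attempt to repair your $\delta\geqslant 2$ step by inducting on the span of $S$, you will find yourself re-proving (ii); that induction, not the parameter count, is the substance of the missing argument.
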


This paper is devoted to generalize Riedl--Yang's result to complete intersections and to arbitrary $k\geq 1$:

\begin{thm}\label{thm:BCFS}  Let $X \subset \mathbb{P}^n$ be a very general complete intersection of multi--degree $\underline{d} = (d_1, \ldots, d_s)$, with 
$1\leqslant s \leqslant n-2$ and $\Pi_{i=1}^s d_i >2$. 

\noindent
Let $ 1 \leqslant k \leqslant n-s-1$ be an integer.
If 
\begin{equation}\label{eq:boundn}
n \leqslant k-1+  \frac{1}{k+2}\sum_{i=1}^s  {d_i+k+1 \choose k+1} , 
\end{equation} 
then $F_k(X)$ contains neither rational nor elliptic curves. 
\end{thm}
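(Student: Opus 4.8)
The plan is to argue by contradiction, converting a low--genus curve in $F_k(X)$ into a scroll inside $X$ whose existence is then obstructed by the genericity of $X$, in the spirit of \cite{RY}. Suppose $F_k(X)$ contains an irreducible rational or elliptic curve and let $f\colon \tilde C\to F_k(X)$ be its normalization, so that $g(\tilde C)\le 1$. Pulling back the universal $\bP^k$--bundle over $F_k(X)$ along $f$ produces a $\bP^k$--bundle $\pi\colon \cP=\bP(\cE)\to \tilde C$, with $\cE$ of rank $k+1$ on $\tilde C$, together with a morphism $\phi\colon \cP\to X\subset \bP^n$ satisfying $\phi^*\cO_{\bP^n}(1)=\cO_{\cP}(1)$. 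First I would dispose of the degenerate situation in which $\phi$ is not generically finite onto its image: there the $k$--planes parametrized by $\tilde C$ do not sweep out a $(k+1)$--fold, and a direct argument on $\bG(k,n)$ excludes this. In the main case $S:=\phi(\cP)\subset X$ is a $(k+1)$--dimensional scroll, ruled by $k$--planes over $\tilde C$, and the whole problem becomes: a very general $X$ contains no such scroll.

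The mechanism is a comparison between the family of scrolls $S$ and the family $B$ of complete intersections of multidegree $\underline d$. The condition that the equation $F_i$ of $X$ vanish on $S$ imposes $h^0(S,\cO_S(d_i))$ linear conditions on $F_i$ (using that the relevant scrolls are projectively normal, which must be arranged); hence $S\subset X$ imposes $\sum_{i=1}^s h^0(S,\cO_S(d_i))$ conditions in total. For a very general $X$ to carry a scroll of the given type, the incidence must dominate $B$, which forces the dimension of the family of scrolls to be at least $\sum_{i=1}^s h^0(S,\cO_S(d_i))$. The positivity that organizes this estimate uniformly comes from the restriction sequence $0\to \cN_{S/X}\to \cN_{S/\bP^n}\to \bigoplus_{i=1}^s\cO_S(d_i)\to 0$: since each $d_i\ge 1$ the quotient is very positive, so the deformations of $S$ inside $X$ are tightly constrained, and by cohomology and base change the constraint is detected by the pushforward $f^*T_{F_k(X)}\cong \pi_*\cN$, whose sub--line--bundles of nonnegative degree would produce, dually, a quotient line bundle of $f^*\Omega^1_{F_k(X)}$ of degree $\le 2g-2\le 0$. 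This is precisely the step that generalizes the key lemma of \cite{RY} from lines on hypersurfaces to $k$--planes on complete intersections.

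The numerical threshold then emerges from Riemann--Roch on the scroll. Pushing forward gives $\pi_*\cO_S(d_i)=\Sym^{d_i}\cE$, a bundle on $\tilde C$ of rank $\binom{d_i+k}{k}$ and degree $\tfrac{d_i}{k+1}\binom{d_i+k}{k}\deg\cE$, so that
\[
h^0(S,\cO_S(d_i))\;=\;h^0\bigl(\tilde C,\Sym^{d_i}\cE\bigr)\;\ge\;\tfrac{d_i}{k+1}\binom{d_i+k}{k}\deg\cE+(1-g)\binom{d_i+k}{k}.
\]
Relating $\deg\cE$ to the span of $S$ (hence to $n$) via $\chi(\cE)=\deg\cE+(k+1)(1-g)$, using $g\le 1$, and comparing with the dimension of the family of scrolls, the dominance inequality rearranges exactly into $n>k-1+\tfrac{1}{k+2}\sum_{i=1}^s\binom{d_i+k+1}{k+1}$, the factor $\tfrac1{k+2}$ reflecting the $(k+1)$--dimensionality of $S$ and the passage from $\binom{d_i+k}{k}$ to $\binom{d_i+k+1}{k+1}$. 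Hence under the hypothesis \eqref{eq:boundn} the count fails, no scroll $S$ lies on a very general $X$, and the rational and elliptic cases are ruled out simultaneously since both satisfy $2g-2\le 0$.

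The main obstacle is making this count sharp and uniform. A naive dimension count is generally too weak; the Voisin--Clemens type positivity of $\bigoplus_i\cO_S(d_i)$ for very general $X$ is what forces the conditions to be independent and bounds the family of scrolls with the correct constant, so that one lands on exactly $\tfrac1{k+2}$ and the additive $k-1$ rather than a weaker inequality. The delicate points are: controlling \emph{all} scroll types at once, including unbalanced $\cE$ and scrolls that are degenerate or fail projective normality (where the crude Riemann--Roch estimate must be replaced by a uniform lower bound on the number of conditions $\cO_S(d_i)$ imposes); ensuring the degenerate, non--generically--finite case truly cannot escape the bound; and verifying that the hypothesis $\Pi_{i=1}^s d_i>2$ is what guarantees enough positivity in the sequence above for the whole argument to close.
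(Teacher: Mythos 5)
Your strategy --- converting a genus $\leqslant 1$ curve in $F_k(X)$ into a $(k+1)$--dimensional scroll $S\subset X$ and running a parameter count ``dimension of the scroll family versus number of conditions imposed on $X$'' --- is not the paper's proof, and it has a gap that is not a deferred technicality but the precise point where this classical approach breaks down. The number of conditions that $S\subset X_u$ imposes on the $i$--th equation is the \emph{rank} of the restriction map $H^0(\mathbb{P}^n,\mathcal{O}_{\mathbb{P}^n}(d_i))\to H^0(S,\mathcal{O}_S(d_i))$, which equals $h^0(S,\mathcal{O}_S(d_i))$ only if that map is surjective. Since $\mathcal{E}$ is globally generated of rank $k+1$, for $g\leqslant 1$ one has $h^0(\tilde C,\mathcal{E})\geqslant \deg\mathcal{E}+k+1$, so linear normality already fails once $e:=\deg\mathcal{E}>n-k$, and no projective--normality statement can restore surjectivity in degrees $d_i$. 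Consequently the number of conditions is bounded above by the constant $\sum_{i=1}^s\binom{n+d_i}{d_i}$, \emph{independently of $e$}, while the dimension of the family of scrolls (equivalently, of genus $\leqslant 1$ curves of Pl\"ucker degree $e$ in $\mathbb{G}(k,n)$) grows like $(n+1)e$. Hence for $e\gg 0$ the dominance inequality you need to violate holds automatically, and your count excludes nothing: curves of large degree in $F_k(X)$ are completely untouched. The appeal to ``Voisin--Clemens type positivity'' is not a mechanism that fixes this, and the tangent--bundle paragraph is circular: exhibiting a quotient of $f^*\Omega^1_{F_k(X)}$ of degree $\leqslant 0$ contradicts nothing unless one already knows positivity of $\Omega^1_{F_k(X)}$ along the curve, which is exactly what is at stake (and is false for large $n$, cf.\ Remark \ref{rem:pap}).

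The paper avoids this wall entirely, which is the whole point of the Riedl--Yang method. It sets $m=\sum_{i=1}^s\binom{d_i+k}{k+1}-2$, the threshold at which \eqref{eq:canonicoFk} makes $\omega_{F_k(Y')}=\mathcal{O}_{F_k(Y')}(1)$ ample for a very general complete intersection $Y'\subset\mathbb{P}^m$ of multidegree $\underline d$; then $F_k(Y')$ is smooth of general type, so no rational or elliptic curve passes through a very general point of it. Realizing $X_0$ and $Y'$ as linear sections of a common complete intersection $Y''\subset\mathbb{P}^M$, $M\gg m$, and applying Result \ref{res2bis} repeatedly to the loci of $r$--plane sections whose Fano scheme carries a rational (or elliptic) curve through the marked $k$--plane, the codimension of the bad locus grows by at least $1$ at each step from $r=m$ down to $r=n$, giving codimension $\geqslant m-n+1\geqslant t+1$ in the incidence variety $\mathcal{U}_{k,M,\underline{d}^n}$; this beats the fibre dimension $t$ of $\pi_2$, so the bad locus cannot dominate the Hilbert scheme. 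Note that hypothesis \eqref{eq:boundn} enters only as the equivalence $m-n\geqslant t$; it does not come from Riemann--Roch on a scroll. Indeed, your claim that the dominance inequality ``rearranges exactly'' into \eqref{eq:boundn} is not substantiated: even in the one regime where your count is legitimate (small $e$, where $\mathcal{E}$ is forced to be a direct sum of nonnegative line bundles and normality can hold), the $e=1$ computation gives $n<k-1+\frac{4}{k+2}+\frac{1}{k+2}\sum_{i=1}^s\binom{d_i+k+1}{k+1}$, not \eqref{eq:boundn}, and the binding constraint varies with $e$. So the proposal would need to be replaced, not repaired: either by the paper's degeneration-to-linear-sections argument or by some genuinely new idea controlling unbounded-degree curves.
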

\noindent The proof  is  contained in Section \ref{S2}.  Section \ref{S3} concerns the quadric case $\Pi_{i=1}^s\;d_i = 2$.

We work over the complex field $\mathbb{C}$. As customary, the term {\em  ``general''} is used to denote a point which sits outside a union 
of finitely many proper closed subsets of an irreducible algebraic variety whereas the term {\em ``very general''} is used to denote a point sitting outside a countable union 
of proper closed subsets of an irreducible algebraic variety.
\bigskip


\section{Preliminaries}\label{S:Pre} 

Let $n \geqslant 3$, $1\leqslant s \leqslant n-2$ and $\underline{d} = (d_1, \ldots, d_s)$ be an $s$--tuple of positive integers such that $\Pi_{i=1}^s\;d_i \geqslant 2$. 
Let $S_{\underline{d}}:= \bigoplus_{i=1}^s H^{0}\left(\mathbb P^n,\mathcal{O}_{\mathbb{P}^{n}}(d_i) \right)$ and consider its Zariski open subset $S^*_{\underline{d}}:= \bigoplus_{i=1}^s \big ( H^{0}\left(\mathbb P^ n,\mathcal{O}_{\mathbb{P}^{n}}(d_i) \right)\setminus \{0\}\big )$. 
For any $u := (g_1,\ldots,g_s) \in S^*_{\underline{d}}$, let $X_u:=V (g_1,\ldots,g_s)\subset \mathbb{P}^n$ denote  the closed subscheme defined by the vanishing of the polynomials $g_1,\ldots,g_s$. 
When $u \in S^*_{\underline{d}}$ is general, $X_u$ is a smooth, irreducible variety of dimension $n-s \geqslant 2$, so that $S^*_{\underline{d}}$ contains an open dense subset parameterizing $s$--tuples $u$ such that $X_u$ is a smooth complete intersection. 

For any integer $1 \leqslant k \leqslant n-s-1$, consider the locus
$$
W_{\underline{d},k}:=\left\{\left.u\in S^*_{\underline{d}} \right| \; F_k(X_u) \neq \emptyset \; \right\} \subseteq S^*_{\underline{d}}$$and set
\[
t(n,k,\underline {d}):= (k+1)(n-k) - \sum_{i=1}^s \binom{d_i +k}{k} .
\] If no confusion arises, we will denote by $t$ the integer $t(n,k,\underline {d})$.  In this set--up, we remind the following results: 

\begin{result}\label{res1} (cf.,\;e.g.,\;\cite{Bo,DeMa,Langer,Morin,Pre} and \cite[Thm.\,22.14,\,p.\,294]{H}) Let $n,k,s$ and $\underline{d}=(d_1,\dots,d_s)$ be integers as above. 

\smallskip

\noindent
$(a)$ When $\Pi_{i=1}^s d_i >2$, one has the following situation.
\begin{itemize}
\item[(i)] For $t <0$,  $W_{\underline{d},k} \subsetneq S_{\underline{d}}^*$ so, for $u \in S_{\underline{d}}^*$ general,  $F_k(X_u) = \emptyset$.
\item[(ii)] For $t \geqslant 0$, $W_{\underline{d},k} = S_{\underline{d}}^*$ and, for $u \in S_{\underline{d}}^*$ general,  $F_k(X_u)$ is smooth with $\dim (F_k(X_u)) = t$ and it is  irreducible when  $t \geqslant 1$. 
\end{itemize}

\smallskip 

\noindent
$(b)$ When  $\Pi_{i=1}^s d_i = 2$, one has the following situation. 

\begin{itemize}
\item[(i)] For $ \lfloor \frac{n-s}{2} \rfloor < k \leqslant n-s-1$, $W_{\underline{d},k} \subsetneq S_{\underline{d}}^*$; more precisely, for $u \in S_{\underline{d}}^*$ such that $X_u$ is smooth, one has
$F_k(X_u) = \emptyset$.
\item[(ii)] For $1 \leqslant k \leqslant \lfloor \frac{n-s}{2} \rfloor$,  
$W_{\underline{d},k} = S_{\underline{d}}^*$ and, for $u \in S_{\underline{d}}^*$ such that $X_u$ is smooth, then  $F_k(X_u)$ is smooth, (equidimensional) with 
$\dim (F_k(X_u)) = t = (k+1)(n-s- \frac{3k}{2})$. Moreover $F_k(X_u)$ is irreducible unless $\dim (X_u) = n-s$ is even and $k= \frac{n-s}{2}$, in which case $F_k(X_u)$ consists of two disjoint irreducible components. In this case, each irreducible component of $F_k(X_u)$ is isomorphic to $F_{k-1}(X_u')$, where $X_u'$ is a general hyperplane section of $X_u$. 
\end{itemize}
\end{result}

\begin{result}\label{res1bis} (cf.,\;e.g.,\;\cite[Rem.\,3.2,\,(2)]{DeMa}) Let $n,k,s$ and $\underline{d}=(d_1,\dots,d_s)$ be integers as above with $\Pi_{i=1}^s d_i \geqslant 2$. 
When $u \in  S_{\underline{d}}^*$ is such that $X_u \subset \mathbb{P}^n$ is a smooth, irreducible complete intersection of dimension $n-s$ and 
$F_k(X_u)$ is not empty, smooth and of (expected) dimension $t$, the canonical bundle of  $F_k(X_u)$ is given by
\begin{equation}\label{eq:canonicoFk}
\omega_{F_k(X_u)} = {\mathcal O}_{F_k(X_u)} \left( -n - 1 + \sum_{i=1}^s \binom{d_i+k}{k+1}\right) 
\end{equation} 
where ${\mathcal O}_{F_k(X_u)} (1)$ is the hyperplane line bundle of  $F_k(X_u)$ in $\mathbb P^{{{n+1}\choose {k+1}}-1}$ via the Pl\"ucker embedding
of $\mathbb G(k,n)$.
\end{result} We will also need the following:

\begin{result}\label{res2bis} (cf.,\cite[Prop.\;3.5]{RY}) Let $n,m$ be positive integers with $m\leqslant n$. Let $B\subset {\mathbb G}(m,n)$ be an irreducible subvariety of codimension at least $\epsilon \geqslant 1$. Let $C\subset {\mathbb G}(m-1,n)$ be a non--empty subvariety satisfying the following condition: for every $(m-1)$-plane $c\in C$, if $b\in {\mathbb G}(m,n)$ is such that $c\subset b$, then $b\in B$. Then the codimension of $C$ in 
$ {\mathbb G}(m-1,n)$ is at least $\epsilon + 1$.
\end{result}


\section{The proof of Theorem \ref{thm:BCFS}}\label{S2} This section is  devoted to the proof of Theorem \ref{thm:BCFS}, which uses a strategy similar to the one  
in \cite[Thm.\,3.3]{RY}.

\begin{proof}[Proof of Theorem \ref{thm:BCFS}] Let $\mathbb{G}:= \mathbb{G}(k,n)$ be the Grassmannian of $k$-planes in $\mathbb P^n$ and 
$\mathbb{H} := \mathbb{H}_{\underline{d}, n}$ be the irreducible component of the Hilbert scheme whose general point parameterizes a smooth, irreducible 
complete intersection $X \subset \mathbb{P}^n$ of dimension $n - s \geqslant 2$ and multi--degree $\underline{d}$ ($\mathbb{H}$ is the image of an open dense subset of 
$S_{\underline{d}}^*$ via the classifying morphism). Let us consider the incidence correspondence  
$$
{\mathcal U}_{k,n,\underline{d}}:=\left\{\left.\left(\left[\Lambda\right], [X] \right) \in \mathbb{G} \times \mathbb{H}\right| \Lambda  \subset X\right\} \subset \mathbb{G} \times \mathbb{H}
$$with the two  projections $\mathbb{G} \stackrel{\pi_1}{\longleftarrow} {\mathcal U}_{k,n,\underline{d}} \stackrel{\pi_2}{\longrightarrow} \mathbb{H}$. 

Note that ${\mathcal U}_{k,n,\underline{d}}$ is smooth and irreducible, because the map $\pi_1$ is surjective and has smooth and irreducible fibres which are all isomorphic via the action of the group of projective transformations. 

Since $\Pi_{i=1}^s d_i >2$, from Result \ref{res1}--(a), if 
$t \leqslant 0$ then $F_k(X)$ is either empty or a zero--dimensional scheme, so in particular the statement holds true. 
We can therefore assume $t \geqslant 1$. In this case, from Result \ref{res1}--(a.ii), the map $\pi_2$ is dominant and the fibre over the general point $[X] \in \mathbb H$ is isomorphic to 
$F_k(X)$, hence it is smooth, irreducible of dimension $t$. Thus ${\mathcal U}_{k,n,\underline{d}}$ dominates  $\mathbb H$ via $\pi_2$ and 
\begin{equation}\label{eq:dimU}
\dim ({\mathcal U}_{k,n,\underline{d}}) = \dim (\mathbb{H}) + t.
\end{equation} 

Consider now 
$$
{\mathcal R}_{k, n,\underline{d}} := \left\{ ([\Lambda], [X]) \in {{\mathcal U}_{k,n,\underline{d}}} \left| \begin{array}{l} \text{there exists a rational curve}\\
 \text{in $F_k(X)$ containing $[\Lambda]$}  \end{array}\right.\right\}\subseteq {{\mathcal U}_{k,n,\underline{d}}}
$$
and similarly
$$
{\mathcal E}_{k, n,\underline{d}} := \left\{ ([\Lambda], [X]) \in {{\mathcal U}_{k,n,\underline{d}}} \left| \begin{array}{l} \text{there exists an elliptic curve}\\
 \text{in $F_k(X)$ containing $[\Lambda]$}  \end{array}\right.\right\}\subseteq {{\mathcal U}_{k,n,\underline{d}}}.
$$
Notice that both ${\mathcal R}_{k,n,\underline{d}}$ and ${\mathcal E}_{k, n,\underline{d}}$ are (at most) countable unions of irreducible locally closed subvarieties. 
Let us see this for $\mathcal R_{k,n,\underline d}$, the case of $\mathcal E_{k,n,\underline d}$ being analogous. 
Look at the morphism $\pi_2: \mathcal U_{k,n,\underline d}\longrightarrow \mathbb H$, whose fibre over a point $[X]\in \mathbb H$ is isomorphic to the Fano scheme $F_k(X)$.
Consider the locally closed subset $\mathcal H$ in the Hilbert scheme consisting of the set of points parameterizing irreducible rational curves in $\mathcal U_{k,n,\underline d}$ contained in fibres of $\pi_2$. Then $\mathcal H$, as well as the Hilbert scheme,  is a countable union of irreducible varieties.  Consider the incidence correspondence 
\[
\mathcal I=\left\{\left.\left(([\Lambda], [X]\right), [C])\in \mathcal U_{k,n,\underline d}\times \mathcal H\right| ([\Lambda], [X])\in C\right\}.
\]
Since $\mathcal H$ is a countable union of irreducible varieties, then also $\mathcal I$ is a countable union of irreducible varieties. Finally $\mathcal R_{k,n,\underline d}$ is the image of $\mathcal I$ via the projection on $\mathcal U_{k,n,\underline d}$, and therefore it is (at most) a countable union of irreducible varieties.

We point out that proving the assertion for very general complete intersections $X\subset \mathbb{P}^n$ of multi--degree $\underline{d}=(d_1,\dots,d_s)$ is equivalent to proving it for very general complete intersections $Y\subset \mathbb{P}^M$ of multi--degree 
$$\underline{d}^n=\underline{d}^n_M:=(\underbrace{1,\dots,1}_{M-n},d_1,\dots,d_s)$$ 
for some $M\geqslant n$.
In particular, condition \eqref{eq:boundn} gives equivalent inequalities and $t:=t(n,k,\underline{d})=t(M,k,\underline{d}^n_M)$.
Moreover, in the light of this fact, we can assume $d_1,\dots,d_s\geqslant 2$.

We claim that, under our assumptions, there exists $M\geqslant n$ such that both ${\mathcal R}_{k,M,\underline{d}^n}$ and ${\mathcal E}_{k,M,\underline{d}^n}$ have codimension at least $t+1$ in ${\mathcal U}_{k,M,\underline{d}^n}$, which proves the statement. 
Indeed, consider the case of  ${\mathcal R}_{k,M,\underline{d}^n}$ (the same reasoning applies to the case with ${\mathcal E}_{k,M,\underline{d}^n}$); if ${\rm codim}_{{\mathcal U}_{k,M,\underline{d}^n}} ({\mathcal R}_{k,M,\underline{d}^n}) \geqslant t+1$ then, by formula \eqref {eq:dimU}, we have that  $\dim ({\mathcal R}_{k, M,\underline{d}^n}) \leqslant \dim (\mathbb{H}_{\underline{d}^n, M}) -1$.
Thus $\dim (\pi_2 \left( {\mathcal R}_{k, M,\underline{d}^n}  \right)) \leqslant \dim (\mathbb{H}_{\underline{d}^n, M}) -1$, proving the assertion for very general complete intersections $Y\subset \mathbb{P}^M$ of multi--degree $\underline{d}^n$, and hence for very general complete intersections $X\subset \mathbb{P}^n$ of multi--degree $\underline{d}$. 

We are therefore left to showing that there exists $M\geqslant n$ such that both ${\mathcal R}_{k,M,\underline{d}^n}$ and ${\mathcal E}_{k, M,\underline{d}^n}$ have codimension at least $t+1$ in ${\mathcal U}_{k,M,\underline{d}^n}$. In what follows, we will focus on ${\mathcal R}_{k,M,\underline{d}^n}$ (the same arguments work for the case ${\mathcal E}_{k, M,\underline{d}^n}$).

If ${\mathcal R}_{k, n,\underline{d}} = \emptyset$, we are done.
So assume ${\mathcal R}_{k, n,\underline{d}} \neq \emptyset$ and take $([\Lambda_0], [X_0]) \in {\mathcal R}_{k, n,\underline{d}}$ a very general point in an irreducible component of ${\mathcal R}_{k, n,\underline{d}}$. 
We note that for any $M\geqslant n$, we can embed $X_0\subset \mathbb{P}^n$ into a $n$-plane of $\mathbb{P}^M$, and hence we can identify $X_0$ to a complete intersection in $\mathbb{P}^M$ of multi--degree $\underline{d}^n$, so that $([\Lambda_0], [X_0]) \in {\mathcal R}_{k, M,\underline{d}^n}$.
Therefore it suffices to find some $M\geqslant n$ and an irreducible subvariety $\mathcal F \subset {\mathcal U}_{k,M,\underline{d}^n}$ such that 
$$([\Lambda_0], [X_0]) \in \mathcal F \;\; {\rm and} \;\;   {\rm codim}_{\mathcal F} ({\mathcal R}_{k, M,\underline{d}^n} \cap \mathcal F) \geqslant t+1.$$
To do so, we start with the following remark: if $X\subset \mathbb{P}^n$ is a very general complete intersection of multi--degree $\underline{d}$, it follows from Results \ref{res1}--(a.ii) and \ref{res1bis} and formula \eqref{eq:canonicoFk} that $\omega_{F_k(X)}$ is ample if and only if 
$$n \leqslant \sum_{i=1}^s \binom{d_i+k}{k+1} - 2.$$
We set 
$$m = m(\underline{d}, k) := \sum_{i=1}^s \binom{d_i+k}{k+1} - 2$$
and, in view of the obvious equality ${d_i+k+1 \choose k+1} = {d_i+k \choose k+1} + {d_i+k \choose k}$, we notice that the hypothesis \eqref {eq:boundn} reads $m-n\geqslant t$. 
Hence we have  $m>n$. 

Let $Y' \subset \mathbb{P}^m$ be a very general complete intersection of multi--degree $\underline{d}$ and let $\Lambda' \subset Y'$ be a very general $k$-plane of $Y'$, i.e., $[\Lambda'] $ corresponds to a very general point in $F_k(Y')$. 
By Result \ref{res1} and the fact that $m > n$, we have that $F_k(Y')$ is 
smooth, irreducible with $\dim (F_k(Y'))= (k+1)(m-k) - \sum_{i=1}^s \binom{d_i +k}{k} > t \geqslant 1$. 
Notice that there are no rational curves in $F_k(Y')$ through $[\Lambda']$ since $F_k(Y')$ is smooth and of general type, because $\omega_{F_k(Y')}=\mathcal{O}_{F_k(Y')}(1)$ by the choice of $m$, and $[\Lambda']$ is very general on $F_k(Y')$.
In other words $([\Lambda'], [Y']) \in {\mathcal U}_{k,m,\underline{d}} \setminus {\mathcal R}_{k,m,\underline{d}}$. 

Take now  an integer $M \gg m > n$ and let $Y'' \subset {\mathbb P}^M$ be a smooth, complete intersection of multi--degree $\underline{d}$, containing a $k$-plane $\Lambda''$, such that $X_0$ is a $n$-plane section of $Y''$, $Y'$ is a $m$-plane section of $Y''$, and $\Lambda''=\Lambda_0=\Lambda'$. 

For any integer $r \geqslant n$, let $Z_r \subset {\mathbb G}(r,M)$ be the variety of $r$-planes in $\mathbb{P}^M$ containing $\Lambda_0$ and let $Z'_r \subseteq Z_r$ be the subset of those $r$-planes $\Lambda \in Z_r$ such that the Fano scheme $F_k(Y'' \cap \Lambda)$ of the complete intersection $Y'' \cap \Lambda \subset \mathbb{P}^M$ contains a rational curve through the point $[ \Lambda_0] \in F_k(Y'' \cap \Lambda)$. Note that $Z_r$ is isomorphic to $\mathbb G(r-k-1, M-k-1)$. 

For any integer $r \geqslant n$, we define the morphism $\phi_r\colon Z_r\longrightarrow {\mathcal U}_{k,M,\underline{d}^r}$ sending the $r$-plane $\Lambda$ to  $([\Lambda_0],[\Lambda\cap Y''])\in {\mathcal U}_{k,M,\underline{d}^r}$. 
It is clear that $\phi_r$ maps $Z_r$ isomorphically onto its image $\mathcal F_r:=\phi_r(Z_r)$, where the inverse map sends a pair $([\Lambda_0],[Y])\in \mathcal F_r$ to the point $[\Lambda]\in Z_r$ corresponding to the linear span $\Lambda=\langle Y\rangle$ (recall that we assumed $d_1,\dots,d_s\geqslant 2$). 
Then the image of $Z'_r$ in ${\mathcal U}_{k,M,\underline{d}^r}$ is $\mathcal F_r \cap {\mathcal R}_{k,M,\underline{d}^r}$. 
We will set $\mathcal F:=\mathcal F_n$. 

By construction of the pair $([\Lambda_0], [Y']) \in {\mathcal U}_{k, M,\underline{d}^m} \setminus {\mathcal R}_{k, M,\underline{d}^m}$, we have that 
${\rm codim}_{\mathcal F_m} (\mathcal F_m \cap {\mathcal R}_{k,M,\underline{d}^m}) = \epsilon \geqslant 1$. 
Now we apply Result \ref {res2bis}, to $\mathcal F_m\cong \mathbb G(r-k-1, M-k-1)$, $B=\mathcal F_m \cap {\mathcal R}_{k,M,\underline{d}^m}$ and $C=\mathcal F_{m-1} \cap {\mathcal R}_{k,M,\underline{d}^{m-1}}$, because clearly $B$ and $C$ verify the condition stated there. 
We deduce that ${\rm codim}_{\mathcal F_{m-1}} (\mathcal F_{m-1} \cap {\mathcal R}_{k,M,\underline{d}^{m-1}}) \geqslant \epsilon+1$. 
Iterating this argument we see that ${\rm codim}_{\mathcal F} (\mathcal F \cap {\mathcal R}_{k,M,\underline{d}^n}) \geqslant m-n+ 1$. 
Now, as we already noticed,  $m - n + 1 \geqslant t+1$ is equivalent to the condition \eqref {eq:boundn}, hence we have ${\rm codim}_{\mathcal F} (\mathcal F \cap {\mathcal R}_{k,M,\underline{d}^n}) \geqslant t+1$,  as wanted. 
This completes the proof. \end{proof}

\begin{remark}\label{rem:pap} Let $X\subset \mathbb P^n$ be a general complete intersection of multi--degree $\underline{d} = (d_1, \ldots, d_s)$. If 
$\sum_{i=1}^s \binom{d_i+k}{k+1}\leqslant n,$ then,  by \eqref {eq:canonicoFk},
$F_k(X)$ is a smooth Fano variety and therefore by  \cite{KMM} it is \emph{rationally connected}, i.e., there is a rational curve passing through two general points of it. 

\end{remark}


\section{The quadric case}\label{S3}

Here we prove the following result:

\begin{thm}\label{thm:quadricstrong}  Let $X \subset \mathbb{P}^n$ be a smooth complete intersection of multi--degree $\underline{d} = (d_1, \ldots, d_s)$, with 
$1\leqslant s \leqslant n-2$ and $\Pi_{i=1}^s d_i = 2$. Let $k$ be an integer such that $ 1 \leqslant k \leqslant n-s-1$. Then:
\begin{itemize}
\item[(i)] for $ \lfloor \frac{n-s}{2} \rfloor < k \leqslant n-s-1$, $F_k(X)$ is empty, whereas 

\item[(ii)] for $ 1 \leqslant k \leqslant \lfloor \frac{n-s}{2} \rfloor$, the single component or both components of $F_k(X)$ (see Result \ref{res1}--(b.ii)) are rationally connected. 
\end{itemize}
\end{thm}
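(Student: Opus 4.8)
The plan is to prove Theorem~\ref{thm:quadricstrong} by treating the two ranges of $k$ separately, with the emptiness in part~(i) being immediate and the rational connectedness in part~(ii) requiring the real work. For part~(i), I would simply invoke Result~\ref{res1}--(b.i): when $\lfloor \frac{n-s}{2}\rfloor < k \leqslant n-s-1$, the Fano scheme $F_k(X)$ is empty for every smooth complete intersection $X$ of multi-degree $\underline d$ with $\Pi_{i=1}^s d_i = 2$, so there is nothing further to check.

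For part~(ii), the first step is to reduce to a single quadric hypersurface. Since $\Pi_{i=1}^s d_i = 2$, the multi-degree $\underline d$ consists of one entry equal to $2$ and all remaining entries equal to $1$; thus $X = Q \cap \Lambda_0$ is the intersection of a smooth quadric $Q \subset \mathbb P^n$ with a linear subspace $\Lambda_0 \cong \mathbb P^{n-s+1}$, and $X$ is itself a smooth quadric of dimension $n-s$ inside that linear subspace. Using the equivalence from the proof of Theorem~\ref{thm:BCFS} — that a complete intersection of multi-degree $(1,\dots,1,2)$ in $\mathbb P^n$ is the same thing, for the purposes of its Fano scheme, as a smooth quadric in $\mathbb P^{n-s+1}$ — I would replace $X$ by a smooth quadric hypersurface $Q' \subset \mathbb P^N$ with $N = n-s+1$, so that the problem becomes: \emph{show each irreducible component of $F_k(Q')$ is rationally connected}, where now $1 \leqslant k \leqslant \lfloor \frac N2 \rfloor$ and I must keep track of the boundary case $N = 2k$.

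The heart of the argument is then to show that the relevant Fano scheme of linear spaces on a smooth quadric is rationally connected. The cleanest route is to exhibit these Fano schemes explicitly. The scheme $F_k(Q')$ of $k$-planes on a smooth quadric of dimension $N-1$ is a homogeneous space under the orthogonal group $\mathrm{SO}(N+1)$ acting transitively on the set of isotropic $(k+1)$-dimensional subspaces, i.e.\ it is an orthogonal isotropic Grassmannian $\mathrm{OG}(k+1, N+1)$ (which, in the extremal case $N=2k$, breaks into the two connected components of maximal isotropic subspaces appearing in Result~\ref{res1}--(b.ii)). Each component is a smooth projective \emph{rational homogeneous variety} $G/P$ with $G$ semisimple and $P$ parabolic. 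I would then conclude by the standard fact that any rational homogeneous space $G/P$ is rational, hence in particular rationally connected. As an alternative, completely self-contained route that avoids invoking the full structure theory, one can appeal directly to the computation already available in the excerpt: by Result~\ref{res1}--(b.ii) each component is smooth of the stated dimension, by Result~\ref{res1bis} and formula~\eqref{eq:canonicoFk} its canonical bundle is $\mathcal O(-N-1 + s\binom{d_i+k}{k+1})$ with the quadric contribution $\binom{k+2}{k+1}=k+2$ and linear contributions vanishing, and a direct check shows $\omega_{F_k(Q')}$ is anti-ample in the whole range $1 \leqslant k \leqslant \lfloor \frac N2 \rfloor$; then each component is a smooth Fano variety and Remark~\ref{rem:pap} (via \cite{KMM}) gives rational connectedness.

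The main obstacle I anticipate is the boundary case $k = \frac{n-s}{2}$ (equivalently $N = 2k$), where $F_k(X)$ is reducible with two disjoint components and the ``expected'' numerology needs care: one must confirm that the anti-ampleness computation (or the identification with a homogeneous space) applies to \emph{each} component separately rather than to the whole scheme, using the description in Result~\ref{res1}--(b.ii) that each component is isomorphic to $F_{k-1}(X')$ for a hyperplane section $X'$. Verifying that the Fano/anti-ample inequality remains strict across the entire allowed range — and in particular is not lost precisely at the top value of $k$ — is the one place where I would slow down and check the binomial arithmetic explicitly rather than wave it through.
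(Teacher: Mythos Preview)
Your proposal is correct, and in fact your ``alternative, completely self-contained route'' is precisely the paper's proof: compute $\omega_{F_k(X)}=\mathcal O_{F_k(X)}(-n+s+k)$ from formula~\eqref{eq:canonicoFk} (applied to the quadric in $\mathbb P^{n-s+1}$), observe that $k\leqslant \frac{n-s}{2}$ forces $-n+s+k\leqslant -k\leqslant -1$, and conclude by \cite{KMM}.

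Your primary route via the orthogonal isotropic Grassmannian $\mathrm{OG}(k+1,N+1)$ is a genuine alternative. It is more structural and gives a stronger conclusion (rationality, not just rational connectedness), at the cost of importing the classification of rational homogeneous varieties; the paper's route stays entirely within the results already quoted in Section~\ref{S:Pre} and is a two-line computation. Either is fine.

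Two small corrections. First, the ``linear contributions'' $\binom{1+k}{k+1}$ are equal to $1$, not $0$; this is harmless because after your reduction to a single quadric in $\mathbb P^N$ there are no linear equations left, but you should not say they vanish. Second, the boundary case $k=\frac{n-s}{2}$ needs no special care: the two components of $F_k(X)$ are smooth and disjoint, so the canonical bundle formula~\eqref{eq:canonicoFk} restricts to each component as $\mathcal O(-n+s+k)$, which is still anti-ample; the paper simply applies the Fano/KMM argument to each component without further comment.
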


\begin{proof} Since  $\Pi_{i=1}^s d_i = 2$,
we have that $X \subset \mathbb{P}^{n-s+1}$ is a smooth quadric hypersurface. From Result \ref{res1}--(b), if $\lfloor \frac{n-s}{2} \rfloor < k \leqslant n-s-1$, $F_k(X)$ is empty. 

Next we assume $1 \leqslant k \leqslant \lfloor \frac{n-s}{2} \rfloor$. 
From Result \ref{res1bis} and formula \eqref{eq:canonicoFk}, we have 
$$\omega_{F_k(X)} = {\mathcal O}_{F_k(X)} \left( -n+s+k \right).$$Since $k \leqslant  \frac{n-s}{2}$, then $-n+s+k \leqslant -1$ therefore the single component or both components of $F_k(X)$ (see Result \ref{res1}--(b.ii)) are smooth Fano
varieties, hence they are rationally connected by \cite{KMM}. 
\end{proof}


\subsection*{Acknowledgment} The authors would like to thank Lawrence Ein for having pointed out the paper \cite{RY}. 


\end{document}